\newtheorem{theorem}{Theorem}
\newcommand{\set}[1]{\{#1\}}
\begin{document}

\begin{center}
{\bfseries\Large An explicit family of 30 blocks meeting every $6$-set of $[60]$ in at least two points\par}
\vspace{1.0ex}
{\normalsize Paulo Henrique Cunha Gomes\par}
{\normalsize School of Technology (FT) – University of Campinas (Unicamp),Limeira,13484-332,São Paulo,Brazil\par}
{\normalsize \texttt{e-mail:p072049@dac.unicamp.br}\par} 
\end{center}

\vspace{1.25ex}

\noindent\textsc{abstract}\par
\noindent
We exhibit an explicit family $\mathcal{B}$ of $30$ subsets (``blocks'') of size $6$ of
$[60]=\{1,2,\dots,60\}$ with the following property: for every $6$-subset $S\subset[60]$,
there exists a block $B\in\mathcal{B}$ such that $|S\cap B|\ge 2$.
The construction is fully explicit and the proof is purely combinatorial.

\vspace{0.8ex}
\noindent\textbf{Keywords:} covering designs; block designs; pigeonhole principle; Johnson space.
\par
\noindent\textbf{2020 Mathematics Subject Classification:} 05B05, 05B40.
\par

\section{Introduction}
Let $[60]=\{1,2,\dots,60\}$. We construct an explicit family $\mathcal{B}$ of $30$ blocks
(each a $6$-subset of $[60]$) such that for every $6$-subset $S\subset[60]$ there exists
$B\in\mathcal{B}$ with $|S\cap B|\ge 2$. For general background on combinatorial designs,
see \cite{CD07}.

\section{Construction}
Any partition $[60]$ into ten \textbf{disjoint} base blocks of size $6$,
not necessarily one of the partitions $G_i$ will present:

\[
G_i=\set{6(i-1)+1,\,6(i-1)+2,\,\dots,\,6i}\qquad (i=1,2,\dots,10).
\]
Group these ten blocks into five fixed pairs:
\[
(G_1,G_2),\ (G_3,G_4),\ (G_5,G_6),\ (G_7,G_8),\ (G_9,G_{10}).
\]
Inside each base block $G_i=\{a_1,a_2,a_3,a_4,a_5,a_6\}$, split it into two triples
\[
G_i^{(1)}=\set{a_1,a_2,a_3},
\qquad
G_i^{(2)}=\set{a_4,a_5,a_6}.
\]
For each paired couple $(G_{2m-1},G_{2m})$ (with $m\in\{1,2,3,4,5\}$), form the four
recombined blocks
\[
G_{2m-1}^{(u)}\cup G_{2m}^{(v)}
\qquad\text{for }u,v\in\{1,2\}.
\]
Finally, define $\mathcal{B}$ as the family consisting of the $10$ base blocks
$G_1,\dots,G_{10}$ together with the $20$ recombined blocks above. Hence
$|\mathcal{B}|=10+20=30$.

\section{Main result}
\begin{theorem}\label{thm:main}
Let $\mathcal{B}$ be the family of $30$ blocks constructed above. For every $S\subset[60]$ with
$|S|=6$, there exists $B\in\mathcal{B}$ such that $|S\cap B|\ge 2$.
Equivalently: every $6$-subset $S$ contains a pair $\{x,y\}\subset S$ that is contained in at least
one block $B\in\mathcal{B}$.
\end{theorem}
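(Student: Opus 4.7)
For a fixed $6$-subset $S\subset[60]$, set $s_i=|S\cap G_i|$ for $i=1,\dots,10$; these are nonnegative integers summing to $6$. The plan is to split on whether any $s_i$ is large, and to reduce the remaining case to a pigeonhole argument on the five fixed pairs.

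First I would dispose of the easy case: if $s_i\ge 2$ for some $i$, then the base block $G_i$ itself lies in $\mathcal{B}$ and satisfies $|S\cap G_i|\ge 2$, so we are done. From now on assume $s_i\le 1$ for every $i$. Since $\sum_i s_i=6$, this forces exactly six of the $s_i$ to equal $1$ and the other four to equal $0$; in particular, $S$ is a transversal of six of the ten base blocks.

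Next I would apply pigeonhole to the pair sums $p_m:=s_{2m-1}+s_{2m}$ for $m=1,2,3,4,5$. Each $p_m\in\{0,1,2\}$ and $\sum_{m=1}^{5}p_m=6$; since $5\cdot 1=5<6$, at least one pair satisfies $p_m=2$, i.e.\ $s_{2m-1}=s_{2m}=1$. Pick such an $m$ and let $x$ be the unique element of $S\cap G_{2m-1}$ and $y$ the unique element of $S\cap G_{2m}$. The triples $G_{2m-1}^{(1)},G_{2m-1}^{(2)}$ partition $G_{2m-1}$, so $x\in G_{2m-1}^{(u)}$ for a uniquely determined $u\in\{1,2\}$; likewise $y\in G_{2m}^{(v)}$ for a unique $v\in\{1,2\}$. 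The recombined block $B:=G_{2m-1}^{(u)}\cup G_{2m}^{(v)}$ belongs to $\mathcal{B}$ by construction, and contains both $x$ and $y$, giving $|S\cap B|\ge 2$.

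There is essentially no ``hard part'' here: the whole argument is a two-line case split followed by one application of pigeonhole. The only place where care is needed is to verify that the sharp inequality $5<6$ really does force some pair sum to hit $2$, and that once the pair is identified, the single points $x,y$ automatically lie in \emph{some} triple-halves so that the recombined block $G_{2m-1}^{(u)}\cup G_{2m}^{(v)}$ is available in $\mathcal{B}$ for any of the four $(u,v)$ choices. No further cases or calculations are required.
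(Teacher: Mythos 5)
Your proof is correct and follows essentially the same route as the paper: dispose of the case where some base block already meets $S$ twice, then use pigeonhole on the five fixed pairs to find a pair $(G_{2m-1},G_{2m})$ each contributing one point, which the appropriate recombined block $G_{2m-1}^{(u)}\cup G_{2m}^{(v)}$ covers. Your arithmetic phrasing via the counts $s_i$ and pair sums $p_m$ is just a slightly more explicit rendering of the paper's ``six hit blocks among five pairs'' pigeonhole step.
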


\begin{proof}
Fix $S\subset[60]$ with $|S|=6$.

\smallskip
\noindent\textbf{Case 1.} If $S$ contains two elements from the same base block $G_i$, then
$|S\cap G_i|\ge 2$ and we are done since $G_i\in\mathcal{B}$.

\smallskip
\noindent\textbf{Case 2.} Otherwise, $S$ meets each base block in at most one element. Since
$|S|=6$, the set $S$ meets exactly six distinct base blocks among $G_1,\dots,G_{10}$.

The ten base blocks are partitioned into five pairs
\[
(G_1,G_2),\ (G_3,G_4),\ (G_5,G_6),\ (G_7,G_8),\ (G_9,G_{10}).
\]
By the pigeonhole principle, among the six base blocks hit by $S$, at least two belong to the same
pair, say $(G_{2m-1},G_{2m})$. Thus $S$ contains one element
$x\in G_{2m-1}$ and one element $y\in G_{2m}$.

By construction, $x\in G_{2m-1}^{(u)}$ for some $u\in\{1,2\}$ and
$y\in G_{2m}^{(v)}$ for some $v\in\{1,2\}$. Therefore the recombined block
\[
B:=G_{2m-1}^{(u)}\cup G_{2m}^{(v)}
\]
belongs to $\mathcal{B}$ and contains both $x$ and $y$, hence $|S\cap B|\ge 2$.
\end{proof}

\section{An explicit example (the 30 blocks)}
Below we list an explicit instance of the construction. The reader may choose any $6$ numbers
(i.e., any $6$-subset $S\subset[60]$) and verify that at least one block below contains at least two
elements of $S$, as guaranteed by Theorem~\ref{thm:main}.

\subsection*{Base blocks (10)}
\begin{enumerate}[label=\textbf{\arabic*.},leftmargin=*,itemsep=0.15em]
\item $\{1,2,3,4,5,6\}$
\item $\{7,8,9,10,11,12\}$
\item $\{13,14,15,16,17,18\}$
\item $\{19,20,21,22,23,24\}$
\item $\{25,26,27,28,29,30\}$
\item $\{31,32,33,34,35,36\}$
\item $\{37,38,39,40,41,42\}$
\item $\{43,44,45,46,47,48\}$
\item $\{49,50,51,52,53,54\}$
\item $\{55,56,57,58,59,60\}$
\end{enumerate}

\subsection*{Recombined blocks (20)}
\begin{enumerate}[label=\textbf{\arabic*.},leftmargin=*,itemsep=0.15em,resume]
\item $\{1,2,3,7,8,9\}$
\item $\{1,2,3,10,11,12\}$
\item $\{4,5,6,7,8,9\}$
\item $\{4,5,6,10,11,12\}$
\item $\{13,14,15,19,20,21\}$
\item $\{13,14,15,22,23,24\}$
\item $\{16,17,18,19,20,21\}$
\item $\{16,17,18,22,23,24\}$
\item $\{25,26,27,31,32,33\}$
\item $\{25,26,27,34,35,36\}$
\item $\{28,29,30,31,32,33\}$
\item $\{28,29,30,34,35,36\}$
\item $\{37,38,39,43,44,45\}$
\item $\{37,38,39,46,47,48\}$
\item $\{40,41,42,43,44,45\}$
\item $\{40,41,42,46,47,48\}$
\item $\{49,50,51,55,56,57\}$
\item $\{49,50,51,58,59,60\}$
\item $\{52,53,54,55,56,57\}$
\item $\{52,53,54,58,59,60\}$
\end{enumerate}

\section{Final remark}
This construction can be viewed as a covering-type property in the Johnson space $J(60,6)$:
every $6$-subset intersects at least one of the $30$ blocks in at least two elements.

\end{document}